\newtheorem{theorem}{Theorem}
\newtheorem{lemma}[theorem]{Lemma}
\newtheorem{proposition}[theorem]{Proposition}
\newtheorem{remark}{Remark}[section]
\newcommand{\RED}[1]{{\color{red}#1}} 
 \renewcommand{\RED}[1]{{#1}}
\newcommand{\memo}[1]{{\bf [MEMO:} #1 \ {\bf :end memo] }}  
   \renewcommand{\memo}[1]{}           
\newcommand{\OMIT}[1]{{\bf [OMIT:} #1 \ {\bf --- end OMIT] }}  
   \renewcommand{\OMIT}[1]{}            
\newcommand{\RR}{{\mathbb{R}}}
\newcommand{\veczero}{{\bf 0}}
\newcommand{\dom}{{\rm dom\,}}
\newcommand{\unitvec}[1]{\chi_{#1}}
\newcommand{\finbox}{\hspace*{\fill}$\rule{0.2cm}{0.2cm}$}
\newcommand{\todaye}{\the\year/\the\month/\the\day}
\newcommand{\Mnat}{{M$^{\natural}$}}
\numberwithin{equation}{section}
\title{On Equivalence of M$\sp{\natural}$-concavity of a Set Function and 
Submodularity of Its Conjugate%
\footnote{
\RED{
This is a revised version of the paper of the same title published in
\textit{Journal of the Operations Research Society of Japan},
{\bf 61} (2018), 163--171.
In particular, the proofs of Lemmas \ref{LMmnatFRconjsubm01-1} 
and \ref{LMmnatFRconjsubm01-4} are improved.
}
}
}
\author{%
Kazuo Murota%
\thanks{%
Faculty of Economics and Business Administration, 
Tokyo Metropolitan University,
Tokyo 192-0397, Japan; 
murota@tmu.ac.jp
}, \and 
Akiyoshi Shioura%
\thanks{%
Department of Industrial Engineering and Economics, 
Tokyo Institute of Technology,
Tokyo 152-8550, Japan;
shioura.a.aa@m.titech.ac.jp
}
}
\date{July 2017 / December 2022}
\begin{document}

\maketitle

\begin{abstract}
A fundamental theorem in discrete convex analysis states that
a set function is M$\sp{\natural}$-concave 
if and only if its conjugate function  is submodular.
This paper gives a new proof to this fact.
\end{abstract}

{\bf Keywords}: 
combinatorial optimization,
discrete convex analysis,  M$\sp{\natural}$-concave function, 
valuated matroid, submodularity, conjugate function



\section{Introduction}
\label{SCintro}

Let
$f: 2^{N} \to \RR \cup \{ -\infty \}$
be a set function on a finite set $N = \{ 1,2,\ldots, n \}$,
where the effective domain 
$\dom f =  \{ X \subseteq N \mid f(X) > -\infty \}$
is assumed to be nonempty.
The conjugate function $g: \RR\sp{N} \to \mathbb{R}$ of $f$ is defined by
\begin{align} 
 g(p) 
 &= \max\{  f(X) - p(X)   \mid X \subseteq N \}
\qquad ( p \in \RR\sp{N}),
\label{conjcave2vex01}
\end{align}
where $p(X) = \sum_{i \in X} p_{i}$
(see Remark \ref{RMconjdef}).

A set function
$f: 2^{N} \to \RR \cup \{ -\infty \}$
with $\dom f \not= \emptyset$
is called 
{\em M$\sp{\natural}$-concave}
\cite{Mdcasiam,MS99gp} if,
for any $X, Y \in \dom f$ and $i \in X \setminus Y$,
it holds that%
\footnote{
We use short-hand notations such as
$X - i = X \setminus  \{ i \}$,
$Y + i = Y \cup \{ i \}$,
$X - i + j =(X \setminus  \{ i \}) \cup \{ j \}$,
and
$Y + i - j =(Y \cup \{ i \}) \setminus \{ j \}$.
}  
\begin{equation}  \label{mconcav1}
f( X) + f( Y ) \leq f( X - i ) + f( Y + i ),
\end{equation}
or there exists some $j \in Y \setminus X$ such that
\begin{equation}  \label{mconcav2}
f( X) + f( Y ) \leq 
 f( X - i + j) + f( Y + i - j).
\end{equation}
Since $f( X) + f( Y ) > -\infty$ for $X, Y \in \dom f$,
 (\ref{mconcav1}) requires $X - i, Y + i \in \dom f$,
and (\ref{mconcav2}) requires $X - i + j, Y + i - j  \in \dom f$.
A function $g: \mathbb{R}\sp{N} \to \mathbb{R}$
is called {\em submodular}
if it satisfies the following inequality:
\begin{equation} \label{gsubmR}
  g(p) + g(q) \geq g(p \vee q) + g(p \wedge q)
\qquad (p, q \in \RR\sp{N}),
\end{equation}
where
$p \vee q$ and $p \wedge q$ are the componentwise maximum and minimum of $p$ and $q$, respectively.

The following theorem states one of the most fundamental facts
in discrete convex analysis \cite{Mdca98,Mdcasiam} that
M$\sp{\natural}$-concavity of a set function $f$
can be characterized by submodularity of 
the conjugate function $g$.

\begin{theorem} \label{THmnatcavbyconjfn01}
A set function
$f: 2\sp{N} \to \mathbb{R} \cup \{ -\infty  \}$
with $\dom f \not= \emptyset$
is M$\sp{\natural}$-concave 
if and only if
its conjugate function 
$g: \RR\sp{N} \to \mathbb{R}$
is submodular.
\finbox
\end{theorem}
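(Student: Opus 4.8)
The plan is to keep both $f$ and $g$ in play and to exploit that $g$ is a finitely piecewise-linear convex function. First I record the structural facts. Since $\dom f\neq\emptyset$ is finite, $g$ is the pointwise maximum of the finitely many affine functions $p\mapsto f(X)-p(X)$ with $X\in\dom f$; hence $g$ is real-valued, convex and polyhedral, every linearity piece has gradient $-\unitvec{X}\in[-1,0]^{N}$, and $g$ is subdifferentiable everywhere, with $X\in\argmax(f-p)\iff g(p)+p(X)=f(X)\iff -\unitvec{X}\in\subgR g(p)$. Biconjugacy also holds: for every $X\subseteq N$ one has $f(X)=\min\{\,g(p)+p(X)\mid p\in\RR^{N}\,\}$, and the minimum is attained whenever $X\in\dom f$ (take, e.g., $p$ with $p_{k}=-M$ for $k\in X$ and $p_{k}=M$ for $k\notin X$, $M$ large), so $g$ determines $f$. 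Finally, I use the standard fact that lattice submodularity on $\RR^{N}$ is a two-coordinate property: $g$ is submodular if and only if
\[
 g(s+\alpha\unitvec{i})+g(s+\beta\unitvec{j})\ \ge\ g(s+\alpha\unitvec{i}+\beta\unitvec{j})+g(s)
\]
for all $s\in\RR^{N}$, distinct $i,j\in N$ and $\alpha,\beta>0$ (a general pair $p,q$ reduces to this by telescoping along a coordinate grid between $p\wedge q$ and $p\vee q$).

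For the implication ``$f$ M$^{\natural}$-concave $\Rightarrow$ $g$ submodular'' I verify the displayed inequality directly. Fix $s,i,j,\alpha,\beta$, choose $X\in\argmax(f-(s+\alpha\unitvec{i}+\beta\unitvec{j}))$ and $Y\in\argmax(f-s)$, so that the right-hand side equals $f(X)+f(Y)-s(X)-s(Y)-\alpha\unitvec{i}(X)-\beta\unitvec{j}(X)$. Since $g(s+\alpha\unitvec{i})\ge f(X')-s(X')-\alpha\unitvec{i}(X')$ and $g(s+\beta\unitvec{j})\ge f(Y')-s(Y')-\beta\unitvec{j}(Y')$ for any $X',Y'\in\dom f$, it suffices to produce $X',Y'\in\dom f$ with $f(X')+f(Y')\ge f(X)+f(Y)$, with $s(X')+s(Y')=s(X)+s(Y)$, with $\unitvec{i}(X')\le\unitvec{i}(X)$ and with $\unitvec{j}(Y')\le\unitvec{j}(X)$. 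If $j\notin Y\setminus X$, take $(X',Y')=(X,Y)$. If $j\in Y\setminus X$, apply the M$^{\natural}$-exchange property to the triple $Y,X,j$: it yields either $(X',Y')=(X+j,\,Y-j)$ (from (\ref{mconcav1})) or $(X',Y')=(X+j-k,\,Y-j+k)$ for some $k\in X\setminus Y$ (from (\ref{mconcav2})); in both cases membership in $\dom f$, the inequality on the $f$-values and conservation of the $s$-total are immediate, while $\unitvec{i}(X')\le\unitvec{i}(X)$ and $\unitvec{j}(Y')\le\unitvec{j}(X)$ hold because $i\neq j$, $k\in X$, $j\notin X$, and $i\notin X$ would force $i\neq k$. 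This settles one direction.

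For the converse, ``$g$ submodular $\Rightarrow$ $f$ M$^{\natural}$-concave,'' I use biconjugacy and submodularity of $g$ together. Fix $X,Y\in\dom f$ and $i\in X\setminus Y$. When $X-i,Y+i\in\dom f$, pick dual-optimal prices $\bar p$ for $X-i$ and $\bar q$ for $Y+i$, so that $f(X-i)+f(Y+i)=g(\bar p)+g(\bar q)+\bar p(X-i)+\bar q(Y+i)$; combining $f(X)+f(Y)\le g(\bar p\wedge\bar q)+g(\bar p\vee\bar q)+(\bar p\wedge\bar q)(X)+(\bar p\vee\bar q)(Y)$ (biconjugacy) with $g(\bar p\wedge\bar q)+g(\bar p\vee\bar q)\le g(\bar p)+g(\bar q)$ (submodularity), a coordinatewise computation gives $f(X)+f(Y)\le f(X-i)+f(Y+i)$ — that is, (\ref{mconcav1}) — as soon as $\bar p,\bar q$ can be chosen with $\bar p_{k}\le\bar q_{k}$ for all $k\in Y\setminus X$. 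If no such choice exists (in particular if $X-i$ or $Y+i$ is not in $\dom f$), the plan is to use the obstructing coordinate as the exchange partner: fix $j\in Y\setminus X$ with $\bar p_{j}>\bar q_{j}$ for the extremal admissible $\bar p,\bar q$, and run the same $\vee/\wedge$-substitution with dual-optimal prices for $X-i+j$ and $Y+i-j$ to obtain (\ref{mconcav2}). Here I use that each set of dual-optimal prices $\{\,p:-\unitvec{A}\in\subgR g(p)\,\}$ is a polyhedral sublattice (Topkis, via submodularity of $g$) bounded on the coordinates outside $A$, so the required extremal prices exist.

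The step I expect to be the main obstacle is precisely this last one: turning the two-coordinate content of submodularity of $g$ into the combinatorial choice of the exchange partner $j$ and verifying (\ref{mconcav2}) for it. This forces a careful comparison of the sublattices of dual-optimal prices attached to $X-i$, $X-i+j$, $Y+i$ and $Y+i-j$ — equivalently, an analysis of the directional derivatives of $g$ at a suitably chosen price — and the degenerate cases in which some of these prices fail to exist must be treated separately. By contrast, the first direction needs only the single M$^{\natural}$-exchange step above, once submodularity has been localized to two coordinates.
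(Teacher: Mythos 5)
Your forward direction is correct and is essentially the paper's Lemma~\ref{LMmnatTOconjsubm01}: localize submodularity to two coordinates, take maximizers for $g(s)$ and $g(s+\alpha\unitvec{i}+\beta\unitvec{j})$, and repair the only problematic case ($j\in Y\setminus X$) with a single \Mnat-exchange applied to the triple $(Y,X,j)$. The bookkeeping of the indicator terms checks out.

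The converse, however, has a genuine gap, and it sits exactly where you predicted. Your argument yields (\ref{mconcav1}) only under the hypothesis that dual-optimal prices $\bar p$ for $X-i$ and $\bar q$ for $Y+i$ exist and can be chosen with $\bar p_{k}\le\bar q_{k}$ on $Y\setminus X$. In the complementary case you propose to ``run the same substitution with dual-optimal prices for $X-i+j$ and $Y+i-j$,'' but this is circular: the conclusion (\ref{mconcav2}) with finite values requires $X-i+j,\,Y+i-j\in\dom f$, and that membership is part of what must be proven; a set outside $\dom f$ has no dual-optimal price, so the substitution cannot even be set up. Worse, when $X-i\notin\dom f$ (so that (\ref{mconcav1}) is unavailable and an exchange partner is mandatory) there is no $\bar p$ at all, hence no ``obstructing coordinate'' with which to select $j$ --- your mechanism for choosing the exchange partner is undefined in precisely the case where it is needed. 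And even when all four sets do lie in $\dom f$, you give no argument that the coordinatewise comparison for the new pair succeeds rather than producing a fresh obstructing coordinate. This missing step is where the paper spends nearly all of its effort: it reduces to the equi-cardinal case via $\tilde f$ (Lemma~\ref{LMmnatFRconjsubm01-4} and Proposition~\ref{PRmnatequicardvalmat}), shows that submodularity of the conjugate forces connectedness of $\dom f$ (Lemma~\ref{LMmnatFRconjsubm01-0}, which is what controls membership in $\dom f$), establishes the exchange property for $|X\setminus Y|=2$ by a perfect-matching LP-duality and price-perturbation argument on four elements (Lemma~\ref{LMmnatFRconjsubm01-1}), and then invokes the local-to-global exchange theorem (Theorem~\ref{THmcavlocexc01}, proved by induction in the appendix). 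Some version of at least the last two ingredients --- or a genuinely new induction on $|Y\setminus X|$ --- is required to close your argument; as written, the converse is a plan rather than a proof.
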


This theorem was first given by Danilov and Lang \cite{DL01gs} in Russian;
it is cited by Danilov, Koshevoy, and Lang \cite{DKL03gs}.
It can also be derived through a combination of 
Theorem 10 of Ausubel and Milgrom \cite{AM02auc} 
with the equivalence of gross substitutability and \Mnat-convexity
due to Fujishige and Yang \cite{FY03gs}.
A self-contained detailed proof can be found in a recent survey paper
by Shioura and Tamura \cite[Theorem 7.2]{ST15jorsj}.

The objective of this paper is to give yet another proof to the above theorem.
The proof does not use polyhedral-geometric characterizations 
of \Mnat-convex sets and functions, nor  does it depend on the M-L conjugacy theorem 
in discrete convex analysis.
Section~\ref{SCprelim} offers preliminaries from discrete convex analysis,
and Section~\ref{SCproof} presents the proof.
Section~\ref{SClocexcproof} is a technical appendix.

\begin{remark} \rm  \label{RMconjdef}
The definition (\ref{conjcave2vex01}) of the conjugate function $g(p)$ here 
is consistent with its interpretation in economics.
If $f(X)$ denotes the utility (or valuation) function for a bundle $X$,
then $g(p)$ in (\ref{conjcave2vex01}) is the indirect utility function under the price vector $p$.
In convex analysis, however, the conjugate of a concave function $f$ 
is more often defined as 
$f\sp{*}(p) = \min_{x} \{  p\sp{\top} x - f(x) \}$.
\finbox
\end{remark}

\section{Preliminaries on M-concave Functions}
\label{SCprelim}

A set function
$f: 2^{N} \to \RR \cup \{ -\infty \}$
with $\dom f \not= \emptyset$
is called 
{\em valuated matroid} \cite{DW90, DW92}
if,
for any $X, Y \in \dom f$ and $i \in X \setminus Y$,
there exists some $j \in Y \setminus X$ such that
\begin{equation}  \label{valmatexc1}
f( X) + f( Y ) \leq 
 f( X - i + j) + f( Y + i -j).
\end{equation}
This property is referred to as the {\em exchange property}.
A valuated matroid is also called an {\em M-concave set function}
\cite{Mstein96,Mdcasiam}.
The effective domain $\mathcal{B}$ of an M-concave function
forms the family of bases of a matroid,
and in particular,
$\mathcal{B}$ consists of equi-cardinal subsets,
i.e., $|X| = |Y|$ for all $X, Y \in \mathcal{B}$.

As is obvious from the definitions, M-concave functions form a subclass of
M$\sp{\natural}$-concave functions.

\begin{proposition}  \label{PRmcav=mnatcav+equicard}
A set function $f$ is M-concave
 if and only if it is an M$\sp{\natural}$-concave function and 
$|X| = |Y|$ for all $X, Y \in \dom f$.
\finbox
\end{proposition}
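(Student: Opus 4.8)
The plan is to prove the two implications separately, using only the definition of M-concavity via the exchange property (\ref{valmatexc1}), the definition of \Mnat-concavity via the disjunction of (\ref{mconcav1}) and (\ref{mconcav2}), and the fact recalled just above the proposition that the effective domain of an M-concave function consists of equi-cardinal sets. Both directions turn out to be short once one notices how the cardinality constraint interacts with the two alternatives in the \Mnat-concavity condition.

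For the ``only if'' direction, assume $f$ is M-concave. Then for any $X, Y \in \dom f$ and $i \in X \setminus Y$, the exchange property (\ref{valmatexc1}) supplies some $j \in Y \setminus X$ with $f(X) + f(Y) \le f(X - i + j) + f(Y + i - j)$, which is exactly alternative (\ref{mconcav2}); hence $f$ is \Mnat-concave (alternative (\ref{mconcav1}) is never invoked). The equi-cardinality of $\dom f$ is the cited fact, so nothing more is needed here.

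For the ``if'' direction, assume $f$ is \Mnat-concave and $|X| = |Y|$ for all $X, Y \in \dom f$. Fix $X, Y \in \dom f$ and $i \in X \setminus Y$; we must exhibit $j \in Y \setminus X$ satisfying (\ref{valmatexc1}). By \Mnat-concavity one of two alternatives holds. If (\ref{mconcav2}) holds, we are done, since it coincides with the exchange inequality. If instead (\ref{mconcav1}) holds, then $X - i \in \dom f$ and $Y + i \in \dom f$, yet $|X - i| = |X| - 1$ and $|Y + i| = |Y| + 1 = |X| + 1$ are unequal, contradicting the equi-cardinality hypothesis. Hence (\ref{mconcav1}) is impossible and (\ref{mconcav2}) must hold, so $f$ satisfies (\ref{valmatexc1}) and is M-concave.

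I do not expect a genuine obstacle in this argument; the only point that needs a moment of care is observing that under equi-cardinality the sets $X - i$ and $Y + i$ appearing in (\ref{mconcav1}) can never both lie in $\dom f$, which is precisely what forces the exchange alternative (\ref{mconcav2}). The whole proof leans on the equi-cardinality statement for M-concave functions quoted in the preliminaries; if a fully self-contained treatment were wanted, that statement would itself require a short inductive argument from the exchange property, and this minor bookkeeping is the only part of the picture that is not entirely immediate.
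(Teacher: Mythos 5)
Your argument is correct and is precisely the routine unpacking of the definitions that the paper has in mind: it states this proposition without proof, treating it as obvious, and your two directions (the exchange property (\ref{valmatexc1}) coincides with alternative (\ref{mconcav2}), while equi-cardinality of $\dom f$ rules out alternative (\ref{mconcav1}) because $X-i$ and $Y+i$ would have to lie in $\dom f$ with different cardinalities) are exactly the intended observations. Your closing caveat is also apt: the only non-immediate ingredient is the equi-cardinality of the effective domain of an M-concave function, which the paper likewise imports as a known fact in the preliminaries rather than proving.
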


The concepts of M-concave and M$\sp{\natural}$-concave functions
are in fact equivalent.
For a function
$f: 2^{N} \to \RR \cup \{ -\infty \}$,
we associate a function $\tilde{f}$
with an equi-cardinal effective domain. 
Denote by $r$ and $r'$ the maximum and minimum, respectively, of $|X|$ for $X \in \dom f$.
Let
$s \geq r-r'$,
$S = \{ n+1,n+2,\ldots, n+s \}$,
and
$\tilde{N} = N \cup S = \{ 1,2,\ldots, \tilde n \}$,
where $\tilde n =n+s$.
We define
$\tilde{f}: 2^{\tilde N} \to \RR \cup \{ -\infty \}$ by
\begin{align} \label{assocMdef} 
\tilde{f}(Z)  =
   \left\{  \begin{array}{ll}
    f(Z \cap N)         &   (|Z| = r) ,     \\
   -\infty    &   (\mbox{otherwise}) .  \\
                     \end{array}  \right.
\end{align}
Then, for $X \subseteq N$ and $U \subseteq S$,
we have $\tilde{f}(X \cup U) = f(X)$ 
if $|U|=r - |X|$.

\begin{proposition}  \label{PRmnatequicardvalmat}
A set function $f$ is M$\sp{\natural}$-concave 
if and only if $\tilde{f}$ is M-concave.
\end{proposition}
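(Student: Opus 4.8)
The plan is to read off the exchange property of $\tilde f$ from the \Mnat-exchange property of $f$ and back, translating along the lift $X \mapsto X \cup U$ (with $U \subseteq S$, $|U| = r - |X|$) and the projection $Z \mapsto Z \cap N$. First, $\dom \tilde f$ consists precisely of the $r$-element sets $Z \subseteq \tilde N$ with $Z \cap N \in \dom f$, so it is equi-cardinal and nonempty (lift any member of $\dom f$, using $0 \le r - |X| \le r - r' \le s$ for $X \in \dom f$); hence ``$\tilde f$ is M-concave'' is meaningful and, by Proposition~\ref{PRmcav=mnatcav+equicard}, is equivalent to $\tilde f$ being \Mnat-concave. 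If $\dom f$ is already equi-cardinal ($r = r'$), every such $U$ is empty, $\tilde f$ restricts to $f$ on $\dom \tilde f = \dom f$ and $S$ is never used, so the statement is trivial; thus the content lies in how $N$ and $S$ interact inside $\tilde N$.

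For the ``if'' direction, assume $\tilde f$ is M-concave and let $X, Y \in \dom f$, $i \in X \setminus Y$. Pick $U, V \subseteq S$ with $|U| = r - |X|$, $|V| = r - |Y|$ and put $\tilde X = X \cup U$, $\tilde Y = Y \cup V$, so $\tilde X, \tilde Y \in \dom \tilde f$, $i \in \tilde X \setminus \tilde Y$, and $\tilde f(\tilde X) = f(X)$, $\tilde f(\tilde Y) = f(Y)$. Applying (\ref{valmatexc1}) to $\tilde f$ produces $j \in \tilde Y \setminus \tilde X = (Y \setminus X) \cup (V \setminus U)$ with $\tilde f(\tilde X) + \tilde f(\tilde Y) \le \tilde f(\tilde X - i + j) + \tilde f(\tilde Y + i - j)$. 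If $j \in Y \setminus X$ the sets $\tilde X - i + j$ and $\tilde Y + i - j$ still have cardinality $r$, so the inequality becomes $f(X) + f(Y) \le f(X - i + j) + f(Y + i - j)$, i.e.\ (\ref{mconcav2}); if $j \in V \setminus U \subseteq S$ it becomes $f(X) + f(Y) \le f(X - i) + f(Y + i)$, i.e.\ (\ref{mconcav1}). So $f$ is \Mnat-concave.

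For the ``only if'' direction, assume $f$ is \Mnat-concave and let $\tilde X, \tilde Y \in \dom \tilde f$, $i \in \tilde X \setminus \tilde Y$; write $X = \tilde X \cap N$, $Y = \tilde Y \cap N$, $U = \tilde X \cap S$, $V = \tilde Y \cap S$, so $X, Y \in \dom f$ with $|X| + |U| = |Y| + |V| = r$. We must exhibit $j \in \tilde Y \setminus \tilde X$ with $\tilde f(\tilde X) + \tilde f(\tilde Y) \le \tilde f(\tilde X - i + j) + \tilde f(\tilde Y + i - j)$, the two sets on the right then automatically having cardinality $r$. The engine is a pair of cardinality-constrained exchange properties of \Mnat-concave functions: (P1) if $|X| \le |Y|$ and $i \in X \setminus Y$, some $j \in Y \setminus X$ has $X - i + j, Y + i - j \in \dom f$ and $f(X) + f(Y) \le f(X - i + j) + f(Y + i - j)$; and (P2) if $|X| < |Y|$, some $j \in Y \setminus X$ has $X + j, Y - j \in \dom f$ and $f(X) + f(Y) \le f(X + j) + f(Y - j)$. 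Granting (P1) and (P2), there are four cases. If $i \in N$ and $|X| \le |Y|$, the $j$ from (P1) works. If $i \in N$ and $|X| > |Y|$, then $|U| < |V|$, so $V \setminus U \neq \emptyset$, and the \Mnat-exchange for $f$ at $(X, Y, i)$ gives either a suitable $j^{*} \in Y \setminus X$ via (\ref{mconcav2}) or inequality (\ref{mconcav1}), in which case any $j \in V \setminus U$ works. If $i \in S$ and $|X| < |Y|$, the $j$ from (P2) works. If $i \in S$ and $|X| \ge |Y|$, then $|V \setminus U| \ge |U \setminus V| \ge 1$ and any $j \in V \setminus U$ works (the inequality collapses to $f(X) + f(Y) \le f(X) + f(Y)$). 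Each case is closed by checking the cardinalities equal $r$ and the relevant $\tilde f$-values equal the displayed $f$-values.

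The crux is proving (P1) and (P2); neither is a single application of (\ref{mconcav1})--(\ref{mconcav2}), and I would prove both by induction on $|X \setminus Y| + |Y \setminus X|$. The base cases are easy: for (P2), if $X \subseteq Y$ then the \Mnat-exchange at $(Y, X, j)$ for any $j \in Y \setminus X$ cannot take option (\ref{mconcav2}) (there is no $i \in X \setminus Y$), hence delivers (\ref{mconcav1}), which is (P2); for (P1), if $X \setminus Y = \{i\}$ an analogous remark applies. The inductive step picks an element of the smaller side, uses the \Mnat-exchange to reduce the symmetric difference, and recombines. I expect this induction --- keeping the inductive hypothesis strong enough to survive recombination, and handling the degenerate sub-case where the auxiliary exchange returns the original pair and only forces an equality --- to be the main obstacle; once (P1) and (P2) are available, the two translation arguments above are routine.
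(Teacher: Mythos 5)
Your reduction framework is sound, and it is worth noting that the paper itself does not prove this proposition at all---it only cites \cite[Theorem 6.15 (2)]{Mdcasiam} for the ``if'' part and \cite{Mmultexcstr17} for the ``only-if'' part---so you are attempting something more self-contained. Your ``if'' direction is complete and correct: lifting $X,Y$ to $\tilde X = X\cup U$, $\tilde Y = Y\cup V$ and splitting on whether the exchange element $j$ lands in $Y\setminus X$ or in $V\setminus U$ does recover (\ref{mconcav2}) and (\ref{mconcav1}) respectively. The four-case analysis in the ``only-if'' direction is also correct and exhaustive \emph{granted} your properties (P1) and (P2), and those properties are indeed true (they are essentially the cardinality-refined exchange lemmas proved in \cite{Mmultexcstr17} and \cite{MS99gp}).

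The genuine gap is that (P1) and (P2) are precisely where all the combinatorial content of the ``only-if'' direction lives, and you do not prove them. You state an induction on $|X\setminus Y|+|Y\setminus X|$, verify only the base cases, and then explicitly defer the inductive step, correctly identifying its sticking point: when you apply the \Mnat-exchange (or the inductive hypothesis) to the reduced pair, the returned exchange element may be the one you just moved (e.g., for (P1), alternative (\ref{mconcav1}) gives $f(X)+f(Y)\le f(X-i)+f(Y+i)$, and applying (P2) to $(X-i,\,Y+i)$ may return $j=i$, yielding only the useless equality $f(X-i)+f(Y+i)\le f(X)+f(Y)$ while still leaving you without any $j\in Y\setminus X$ for the original pair). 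Resolving this requires a genuinely different device --- e.g.\ a perturbation of $f$ by a weight vector that makes the degenerate choice strictly suboptimal, or a carefully strengthened induction hypothesis --- and none is supplied. As it stands the argument establishes the equivalence only modulo two unproven lemmas that are themselves essentially equivalent to the hard half of the proposition, so the proof is incomplete.
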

\begin{proof}
This fact is well known among experts.
Since $f$ is a projection of $\tilde{f}$, the ``if'' part 
follows from \cite[Theorem 6.15 (2)]{Mdcasiam}.
A proof of the ``only-if'' part can be found, e.g., in \cite{Mmultexcstr17}.
\end{proof}

The exchange property for M-concave set functions
is in fact equivalent to a local exchange property
under some assumption on the effective domain.
We say that a family $\mathcal{B}$ of equi-cardinal subsets is 
{\em connected}
if, for any distinct $X, Y \in \mathcal{B}$,
there exist $i \in X \setminus Y$ and $j \in Y \setminus X$ such that
$Y + i - j \in \mathcal{B}$.
As is easily seen, $\mathcal{B}$ is connected
if and only if,
for any distinct $X, Y \in \mathcal{B}$ 
there exist distinct
$i_{1}, i_{2}, \ldots, i_{m} \in X \setminus Y$
and 
$j_{1}, j_{2}, \ldots, j_{m} \in Y \setminus X$,
where $m = |X \setminus Y| = |Y \setminus X|$,
such that 
$Y \cup \{ i_{1}, i_{2}, \ldots, i_{k} \} \setminus \{ j_{1}, j_{2}, \ldots, j_{k} \} \in \mathcal{B}$
for $k=1,2,\ldots,m$.

The following theorem is a strengthening
 by Shioura \cite[Theorem 2]{Shi00lev}
of the local exchange theorem of Dress--Wenzel \cite{DWperf92} and Murota \cite{Mmax97}
(see  also \cite[Theorem 5.2.25]{Mspr2000}, \cite[Theorem 6.4]{Mdcasiam})%
\footnote{
In \cite{DWperf92,Mmax97},
the effective domain is assumed to be a matroid basis family,
and the assumption is weakened to connectedness in \cite{Shi00lev}.
It is well known that a matroid basis family is connected.
}.  

\begin{theorem} \label{THmcavlocexc01}
A set function  $f: 2\sp{N} \to \RR \cup \{ -\infty \}$ 
is M-concave
if and only if 

{\rm (i)}
$\dom f$ is a connected nonempty family of equi-cardinal sets,
and 

{\rm (ii)}
for any $X, Y \in \dom f$ 
with $|X \setminus Y|=2$,
there exist some $i \in X \setminus Y$
and $j \in Y \setminus X$ for which {\rm (\ref{valmatexc1})} holds.
\end{theorem}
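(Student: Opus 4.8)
\medskip\noindent\emph{Proof proposal.}
The ``only if'' implication is immediate. If $f$ is M-concave it satisfies the exchange property (\ref{valmatexc1}) for all $X,Y\in\dom f$ and $i\in X\setminus Y$; restricting to pairs with $|X\setminus Y|=2$ gives (ii), and the equi-cardinality of $\dom f$ was recorded in Section~\ref{SCprelim}. For connectedness, given distinct $X,Y\in\dom f$ pick any $i\in X\setminus Y$ and let $j\in Y\setminus X$ be as in (\ref{valmatexc1}); then $f(X-i+j)+f(Y+i-j)\ge f(X)+f(Y)>-\infty$, so in particular $Y+i-j\in\dom f$.

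For the ``if'' implication, assume (i) and (ii); the task is to derive (\ref{valmatexc1}) for every $X,Y\in\dom f$ and $i\in X\setminus Y$. I would induct on $k:=|X\setminus Y|$, which equals $|Y\setminus X|$ by (i). The case $k=1$ is trivial: then $Y\setminus X=\{j\}$ and (\ref{valmatexc1}) reads $f(X)+f(Y)\le f(Y)+f(X)$. For $k=2$, write $A=X\cap Y$, $X\setminus Y=\{i_1,i_2\}$, $Y\setminus X=\{j_1,j_2\}$; after relabelling, (ii) gives a single inequality $f(X)+f(Y)\le f(A+i_1+j_1)+f(A+i_2+j_2)$ whose right-hand side equals $f(X-i_1+j_2)+f(Y+i_1-j_2)$ and also $f(X-i_2+j_1)+f(Y+i_2-j_1)$, and whose finiteness forces $A+i_1+j_1,A+i_2+j_2\in\dom f$; hence (\ref{valmatexc1}) holds for $i=i_1$ (with $j=j_2$) and for $i=i_2$ (with $j=j_1$).

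The inductive step $k\ge3$ is the core, and I would run it as a minimal-counterexample (``no-shortcut'') argument: suppose (\ref{valmatexc1}) fails somewhere, and choose a failing triple $(X,Y,i)$ with $|X\setminus Y|$ minimal and, subject to that, with a suitable secondary quantity (such as $|X\cup Y|$) minimal. By connectedness --- in its sequential form from Section~\ref{SCprelim} --- there is a single exchange $Y\mapsto Z:=Y+i^{\circ}-j^{\circ}\in\dom f$ with $i^{\circ}\in X\setminus Y$ and $j^{\circ}\in Y\setminus X$; then $(X,Z)$ has symmetric difference $2(k-1)$. When $i^{\circ}\neq i$ one has $i\in X\setminus Z$, so the induction hypothesis applies to $(X,Z,i)$ and furnishes $j^{*}\in Z\setminus X\subseteq Y\setminus X$ with $f(X)+f(Z)\le f(X-i+j^{*})+f(Z+i-j^{*})$. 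Rewriting $Z+i-j^{*}=(Y+i-j^{*})+i^{\circ}-j^{\circ}$ and combining this with the ``parallel'' estimate $f(Y)+f(Y+i-j^{*})\le f(Z)+f(Z+i-j^{*})$ --- itself obtained by applying the induction hypothesis to pairs of strictly smaller symmetric difference --- one finds, after cancelling $f(Z)$ and $f(Z+i-j^{*})$, that $f(X)+f(Y)\le f(X-i+j^{*})+f(Y+i-j^{*})$, contradicting the failure at $(X,Y,i)$. The residual case, in which every admissible single exchange from $Y$ is forced to move $i$ (so $i^{\circ}=i$ throughout), is treated by peeling instead an element $i'\in(X\setminus Y)\setminus\{i\}$, applying the induction hypothesis to the pair obtained after exchanging $i'$ out of $X$, and using the secondary minimality to exclude the remaining degenerate patterns.

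I expect the main obstacle to be exactly this bookkeeping inside the inductive step. One has to choose the intermediate set $Z$ --- equivalently, the order in which the elements of $X\setminus Y$ leave $X$ and those of $Y\setminus X$ enter it --- so that simultaneously (a) every pair handed to the induction hypothesis has strictly smaller symmetric difference, (b) the distinguished element $i$ sits on the correct side of each such pair, and (c) all the sets appearing in the chained inequalities actually lie in $\dom f$, so that no term silently becomes $-\infty$ and the inequalities telescope cleanly. The last point is where the present statement genuinely strengthens the classical Dress--Wenzel/Murota version with a matroid-basis effective domain: one may no longer assume the existence of an admissible single exchange at a prescribed element, so the configurations where the move one wants to make would leave $\dom f$ must be absorbed through connectedness together with the minimality of the counterexample. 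By contrast, the ``only if'' direction and the $k\le2$ base cases are routine.
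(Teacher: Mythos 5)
Your overall skeleton for the ``if'' direction --- base cases $k\le 2$, then a minimal counterexample in which connectedness supplies a single exchange $Z=Y+i^{\circ}-j^{\circ}$ and the induction hypothesis is applied to $(X,Z,i)$ --- is indeed the skeleton of the paper's argument (Claims 2 and 3 in Section~\ref{SClocexcproof}), and your ``only if'' direction and the cases $k\le 2$ are fine. The genuine gap is the ``parallel estimate''. As written, $f(Y)+f(Y+i-j^{*})\le f(Z)+f(Z+i-j^{*})$ is not an exchange-type inequality at all (it translates both sets by $+i^{\circ}-j^{\circ}$) and is false in general: for a modular $f$ on the $r$-subsets of $N$, which is M-concave, it reduces to $c_{i^{\circ}}\ge c_{j^{\circ}}$. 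It also does not telescope as you claim; the inequality you actually need is $f(Y)+f(Z+i-j^{*})\le f(Z)+f(Y+i-j^{*})$. That corrected inequality is one of the \emph{two} branches of the local exchange property applied to the pair $\bigl(Z+i-j^{*},\,Y\bigr)$, whose symmetric difference is $4$; condition (ii) only guarantees that some branch holds, and the other branch, $f(Z+i-j^{*})+f(Y)\le f(Y+i-j^{\circ})+f(Y+i^{\circ}-j^{*})$, does not combine with the inequality furnished by the induction hypothesis. Nothing in your set-up excludes that branch --- this is exactly the ``bookkeeping'' your last paragraph defers, and it is where the proof actually lives.

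The paper closes this hole with two devices you would need to reproduce. First, it passes to $f_{p}(\cdot)=f(\cdot)+p(\cdot)$ for a price vector $p$ chosen so that every feasible exchange of $i_{*}$ out of $X$ has normalized gain $0$ while every exchange of $i_{*}$ into $Y$ has strictly negative gain (Claim 1, using the invariance (\ref{wpeq})). Second, the intermediate set is not an arbitrary connectedness neighbour of $Y$: one first extracts, via connectedness applied twice together with the $k=2$ case, an element $i_{0}\in X\setminus Y$ with $i_{0}\ne i_{*}$ that admits a single exchange into $Y$ --- this is the concrete treatment of your vaguely described ``residual case'' where the first available exchange moves $i$ itself --- and then chooses $j_{0}$ to \emph{maximize} $f_{p}(Y,j,i_{0})$ over $j\in Y\setminus X$, as in (\ref{wpmax}). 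With this normalization and maximality, whichever branch of the two-term maximum in (\ref{vmprupper2}) is realized, the unwanted terms are dominated (by (\ref{wpB2}) and (\ref{wpmax})) and one concludes $(X,Z)\in\mathcal{D}$ for $Z=Y+i_{0}-j_{0}$, contradicting minimality. Without these two ingredients the chained inequalities in your inductive step cannot be made to close, so the proposal as it stands is not a proof.
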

\begin{proof}
The ``only-if'' part is obvious.   For the ``if'' part,   
the proof of Theorem 5.2.25 in \cite[pp.295--297]{Mspr2000}
works with the only modification in the proof of Claim 2 there.
Since the proof is omitted in \cite{Shi00lev},
we include the proof in Section~\ref{SClocexcproof}.
\end{proof}

\section{A Proof of Theorem \ref{THmnatcavbyconjfn01}}
\label{SCproof}

We prove the characterization  of M$\sp{\natural}$-concavity
by submodularity of the conjugate function (Theorem \ref{THmnatcavbyconjfn01}).
Let 
$f: 2\sp{N} \to \RR \cup \{ -\infty \}$
be a set function with $\dom f \not= \emptyset$,
and $g: \RR\sp{N} \to \mathbb{R}$ be its conjugate function,
which is defined as
$g(p) = \max\{  f(X) - p(X)   \mid X \subseteq N \}$
in (\ref{conjcave2vex01}).

We first show that \Mnat-concavity of $f$ implies submodularity of $g$.

\begin{lemma} \label{LMmnatTOconjsubm01}
If $f$ is M$\sp{\natural}$-concave, then $g$ is submodular. 
\end{lemma}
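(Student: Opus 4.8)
The plan is to prove submodularity of $g$ by reducing to the M-concave case and then exploiting the well-known fact that the conjugate of an M-concave function is an M$\sp{\natural}$-convex function, which in turn is submodular. First I would note that $g$ is finite-valued everywhere: for each $p$, the maximum in (\ref{conjcave2vex01}) is over the nonempty finite family $2^{N}$ and is attained at some $X \in \dom f$, so $g(p) \in \RR$. Next, I would pass from the M$\sp{\natural}$-concave $f$ to the associated M-concave function $\tilde f$ on the enlarged ground set $\tilde N = N \cup S$ defined in (\ref{assocMdef}); by Proposition~\ref{PRmnatequicardvalmat}, $\tilde f$ is M-concave. The conjugate $\tilde g : \RR\sp{\tilde N} \to \RR$ of $\tilde f$ is then the support function of the M-convex polyhedron (the subdifferential description of M$\sp{\natural}$-convexity), and a key intermediate step is to relate $g$ on $\RR\sp{N}$ to the restriction of $\tilde g$ to $\RR\sp{N} \times \{\veczero\}$ (or to $\RR\sp{N}$ extended by any fixed values on $S$), so that submodularity of $\tilde g$ on $\RR\sp{\tilde N}$ transfers to submodularity of $g$ on $\RR\sp{N}$. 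Since $p \vee q$ and $p \wedge q$ only couple coordinates within $N$, and the $S$-coordinates are held fixed, this restriction step is clean.

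With that reduction in hand, the core of the argument is to show submodularity of the conjugate of an M-concave function directly. The approach I would take is to fix $p, q \in \RR\sp{N}$, let $X$ attain $g(p \vee q)$ and $Y$ attain $g(p \wedge q)$ among $X, Y \in \dom f$, and then use the exchange property (\ref{valmatexc1}) of the M-concave $f$ to rewrite $f(X) + f(Y) \le f(X - i + j) + f(Y + i - j)$ for a suitable $i \in X \setminus Y$, $j \in Y \setminus X$. The idea is to move, one element at a time, the coordinates of $X$ and $Y$ toward a pair $(X', Y')$ for which $p(X') + q(Y') \le (p \vee q)(X) + (p \wedge q)(Y)$; a single swap at a coordinate $i \in X \setminus Y$ on which, say, $p_i \ge q_i$ (so $(p\vee q)_i = p_i$) replaced by $j \in Y \setminus X$ can only help because of how the max/min split the values. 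One then iterates, or — more efficiently — invokes the equi-cardinality of $\dom f$ together with the exchange property in its multiple-exchange form to pass in one step from $(X, Y)$ to sets $X', Y'$ with $X' \triangle Y' \subseteq $ a prescribed symmetric difference, giving $f(X) + f(Y) \le f(X') + f(Y')$ with $p(X') + q(Y') + (\text{cross terms}) \le (p\vee q)(X) + (p\wedge q)(Y)$; combining yields $g(p) + g(q) \ge f(X') - p(X') + f(Y') - q(Y') \ge f(X) + f(Y) - (p\vee q)(X) - (p\wedge q)(Y) = g(p\vee q) + g(p\wedge q)$.

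I expect the main obstacle to be the bookkeeping in that exchange step: one must choose the exchange element(s) so that the resulting sets $X'$ and $Y'$ simultaneously (a) stay in $\dom f$ — which the exchange property guarantees only for the particular $j$ it produces — and (b) redistribute the ground-set elements between $X$ and $Y$ in exactly the way needed so that the linear functional inequality $p(X') + q(Y') \le (p\vee q)(X) + (p\wedge q)(Y)$ holds. Controlling both constraints at once, when the exchange property does not let us pick $j$ freely, is the delicate point; the standard device is to order the elements of $X \setminus Y$ by the sign of $p_i - q_i$ and peel them off greedily, using the fact that an element of $X$ with $p_i < q_i$ contributes $q_i$ to $(p\wedge q)$ but would be ``cheaper'' sitting in the set paired with $q$, and dually for $Y$. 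An alternative, possibly cleaner, route is to avoid the reduction to $\tilde f$ altogether and argue submodularity of $g$ directly from the M$\sp{\natural}$-exchange property (\ref{mconcav1})--(\ref{mconcav2}), handling the two cases of the exchange axiom separately; case (\ref{mconcav1}), where an element of $X$ is simply deleted and added to $Y$, corresponds exactly to a coordinate where $(p\vee q)$ and $(p\wedge q)$ swap roles, so it slots in naturally. I would present whichever of these requires the least case analysis, and relegate the element-ordering lemma to a short preliminary claim.
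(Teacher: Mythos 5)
There is a genuine gap: the combinatorial heart of the lemma is precisely the step you defer. Your outer reductions are unobjectionable --- $g(p)=\tilde g(p,\veczero)$ and a slice of a submodular function with some coordinates frozen is submodular, so it would suffice to treat the M-concave case (though this detour via $\tilde f$ is not needed for this direction). But for the core step you propose to take maximizers $X$ for $g(p\vee q)$ and $Y$ for $g(p\wedge q)$ and exchange elements until $p(X')+q(Y')\le (p\vee q)(X)+(p\wedge q)(Y)$ while $f(X)+f(Y)\le f(X')+f(Y')$. The exchange axiom does not let you choose which element enters or leaves, so the greedy peeling ``ordered by the sign of $p_i-q_i$'' needs an explicitly maintained invariant that you never state, and the ``multiple-exchange form'' you invoke as the efficient alternative is itself a nontrivial theorem (it is the subject of \cite{Mmultexcstr17}) that cannot simply be assumed. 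As written, the proposal records the inequality to be proved together with the observation that proving it is delicate; that observation is correct, which is why this does not yet constitute a proof. A further slip: the conjugate of an M-concave function is L$\sp{\natural}$-convex, not ``M$\sp{\natural}$-convex,'' and that conjugacy is essentially the theorem under discussion, so it cannot be cited as known.

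The paper sidesteps all of this bookkeeping by first reducing submodularity of $g$ to the local two-coordinate condition $g(p+a\unitvec{i})+g(p+b\unitvec{j})\ge g(p)+g(p+a\unitvec{i}+b\unitvec{j})$ for distinct $i,j$ and $a,b\ge 0$. Taking maximizers $X$ for $g(p)$ and $Y$ for $g(p+a\unitvec{i}+b\unitvec{j})$, every case is immediate except $\{i,j\}\subseteq X$ and $Y\cap\{i,j\}=\emptyset$, where a single application of the M$\sp{\natural}$-exchange axiom at $i\in X\setminus Y$ yields $X',Y'$ with $i\notin X'$ and $j\notin Y'$, which is all that is needed. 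Your closing remark about arguing directly from (\ref{mconcav1})--(\ref{mconcav2}) points in this direction, but without the reduction to the local condition it does not dispose of the general $p\vee q$, $p\wedge q$ inequality. If you want to salvage your route, either carry out the induction on the number of ``misplaced'' elements with a precise invariant, or adopt the local-submodularity reduction.
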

\begin{proof}
As is well known, $g$ is submodular if and only if
\begin{equation} \label{mTOcjsbm01prf1}
g( p+ a \unitvec{i}) + g( p+ b \unitvec{j})  \geq 
g( p) + g( p + a \unitvec{i}+ b \unitvec{j}) 
\end{equation}
for any $p \in \RR\sp{N}$, distinct $i,j \in N$, and $a, b \geq 0$,
where $\unitvec{i}$ and $\unitvec{j}$ are the $i$th and $j$th unit vectors, respectively.
For simplicity of notation we assume $p=\veczero$,
and write
$p\sp{i}= a \unitvec{i}$,
$p\sp{j}= b \unitvec{j}$,
and
$p\sp{ij}= a \unitvec{i}+ b \unitvec{j}$.
Take $X, Y \subseteq N$ such that
\[
g(p) = f(X) - p(X) = f(X),
\quad
g( p\sp{ij}) = f(Y) - p\sp{ij}(Y) = f(Y) - a | Y \cap \{ i \} | - b | Y \cap \{ j \} |.
\]
Note also that
$g( p\sp{i}) = \max\ \{ f(Z)  - a | Z \cap \{ i \} |   \mid  Z \subseteq N \} $
and similarly for $g( p\sp{j})$.

\begin{itemize}
\item
If
$| Y \cap \{ i,j \} | =2$, then
$g(p)+g( p\sp{ij})  = ( f(X) - a ) + ( f(Y) - b )
\leq ( f(X) - a | X \cap \{ i \} | )
  + ( f(Y) - b | Y \cap \{ j \} | )
\leq g( p\sp{i}) + g( p\sp{j})$.

\item
If
$| Y \cap \{ i,j \} | =1$, we may assume $i \in Y$ and $j \not\in Y$. 
Then
$g(p)+g( p\sp{ij})  =   f(X)   +  ( f(Y) - a)
\leq ( f(X) - a| X \cap \{ i \} | )
  + ( f(Y) - b | Y \cap \{ j \} | )
\leq g( p\sp{i}) + g( p\sp{j})$.

\item
If
$| Y \cap \{ i,j \} | =0$, then
$g(p)+g( p\sp{ij})  =   f(X)  +  f(Y)$.
If $i \not\in X$, we have
$f(X)  +  f(Y)
= ( f(X) - a | X \cap \{ i \} | )
  + ( f(Y) - b | Y \cap \{ j \} | )
\leq g( p\sp{i}) + g( p\sp{j})$.
Similarly, if $j \not\in X$.
Suppose $\{ i,j \} \subseteq X$.
By the \Mnat-concave exchange property,
we have
$f(X)  +  f(Y) \leq f(X')  +  f(Y')$,
where 
$(X',Y')=(X-i, Y+i)$ or 
$(X',Y')=(X-i+k, Y+i-k)$ for some $k \in Y \setminus X$.
Since $i \not\in X'$ and $j \not\in Y'$,
we have
$f(X')  +  f(Y') 
=  ( f(X') - a| X' \cap \{ i \} | )  + ( f(Y') - b | Y' \cap \{ j \} | )
\leq g( p\sp{i}) + g( p\sp{j})$.
\vspace{-1.5\baselineskip}
\end{itemize}
\end{proof}
\vspace{\baselineskip}

Next, we show, in two steps, that submodularity of $g$ implies \Mnat-concavity of $f$.
We treat the M-concave case
in Lemmas \ref{LMmnatFRconjsubm01-0} to \ref{LMmnatFRconjsubm01-2},
and the \Mnat-concave case in Lemma~\ref{LMmnatFRconjsubm01-4}.
It is emphasized that the combinatorial essence is captured in Lemma~\ref{LMmnatFRconjsubm01-1}
for the M-concave case.

\medskip

\begin{lemma} \label{LMmnatFRconjsubm01-0}
If $\dom f$ is a family of equi-cardinal sets
and $g$ is submodular, 
then $\dom f$ is connected.
\end{lemma}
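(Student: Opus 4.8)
The plan is to show that if $\dom f$ is a family of equi-cardinal sets and $g$ is submodular, then any two distinct members of $\dom f$ can be connected by a single exchange step, i.e., for distinct $X, Y \in \dom f$ there are $i \in X \setminus Y$ and $j \in Y \setminus X$ with $Y + i - j \in \dom f$. The key is to translate the combinatorial statement ``$Z \in \dom f$'' into a statement about the conjugate $g$, using that $X$ is in $\dom f$ with $|X| = r$ exactly when there is a direction $p$ along which $X$ is the (or a) maximizer of $f(Z) - p(Z)$ — more precisely, by driving the coefficients $p_i$ to $\mp\infty$ on coordinates inside/outside $X$ one can force the maximizer of $f(Z) - p(Z)$ to lie in $\dom f$ and be ``close'' to $X$.

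First I would fix distinct $X, Y \in \dom f$, and choose $i \in X \setminus Y$ arbitrarily (this set is nonempty since $|X| = |Y|$ and $X \neq Y$). The aim is to produce the matching $j$. I would consider a vector $p$ constructed to make $Y$ the forced maximizer of $f(Z) - p(Z)$: take $p_k$ very large (close to $+\infty$) for $k \notin Y$ and very negative for $k \in Y$, so that $g(p) = f(Y) - p(Y)$ and the only near-optimal sets are subsets of $Y$ together with a controlled number of outside elements; since $\dom f$ is equi-cardinal this pins the maximizer to $Y$ itself. Then I would perturb: increase $p_i$ by a small amount $\delta > 0$ and decrease $p_j$ by $\delta$ for a candidate $j \in Y \setminus X$, and similarly set up a second perturbation handling $i$ alone. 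The submodular inequality (\ref{mTOcjsbm01prf1}) relating $g$ at $p$, $p + a\unitvec{i}$, $p + b\unitvec{j}$, $p + a\unitvec{i} + b\unitvec{j}$ (with suitable signs) will force some alternative set $Z$ attaining one of these values to be a neighbour $Y + i - j$ of $Y$, and such a $Z$ must lie in $\dom f$ because $g$ is finite-valued (so the max is attained at a set in $\dom f$).

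The main obstacle I expect is the bookkeeping in choosing the perturbation vector $p$ correctly: one must ensure that the four maximizers entering the submodular inequality are, respectively, forced to be $Y$, a set of the form $Y - j$ (plus a forced outside element keeping cardinality $r$), a set of the form $Y + i$ (minus a forced element), and a set of the form $Y + i - j$, and that no spurious set sneaks in and attains the maximum instead. Here the equi-cardinality of $\dom f$ is doing real work: it rules out sets of the wrong size, so once the large/small coefficients are in place only a few combinatorial possibilities remain, and the inequality $g(p) + g(p + a\unitvec{i} + b\unitvec{j}) \le g(p + a\unitvec{i}) + g(p + b\unitvec{j})$ becomes a genuine constraint forcing $Y + i - j \in \dom f$ for some $j$. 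I would handle this by taking the ``large'' coefficient as a formal parameter $M \to \infty$ and comparing leading-order terms, or equivalently by a direct case analysis on which coordinates a maximizer can contain; once the correct $p$ is identified the rest is a short deduction, and finally, since the choice of $i \in X \setminus Y$ was essentially arbitrary, this proves connectedness of $\dom f$.
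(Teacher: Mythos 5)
There is a genuine gap at the heart of your plan: the step where the two-coordinate submodular inequality (\ref{mTOcjsbm01prf1}), applied at a base point forcing $Y$ and perturbed at one $i \in X\setminus Y$ and one $j \in Y\setminus X$, is supposed to ``force $Y+i-j \in \dom f$''. That inequality is never violated for such a choice of coordinates, so it cannot yield a contradiction. Concretely, take $N=\{1,2,3,4\}$, $\dom f = \{\{1,2\},\{3,4\}\}$ with $f\equiv 0$ on $\dom f$: this is an equi-cardinal, disconnected family. Writing $u=-p_1-p_2$ and $v=-p_3-p_4$, we have $g(p)=\max\{u,v\}$, and for $i\in\{1,2\}=X\setminus Y$ and $j\in\{3,4\}=Y\setminus X$ the four quantities in (\ref{mTOcjsbm01prf1}) are $\max\{u-a,v\}$, $\max\{u,v-b\}$, $\max\{u,v\}$, $\max\{u-a,v-b\}$, and the inequality
$\max\{u-a,v\}+\max\{u,v-b\}\ge\max\{u,v\}+\max\{u-a,v-b\}$
holds for \emph{all} $p$ and all $a,b\ge 0$ (it is the submodularity of $\max\{\phi(s),\psi(t)\}$ for decreasing univariate $\phi,\psi$). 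The violation of submodularity that witnesses disconnectedness lives instead \emph{inside} the coordinates of $X\setminus Y$: with $i=1$, $j=2$ the inequality reads $\max\{u-a,v\}+\max\{u-b,v\}\ge\max\{u,v\}+\max\{u-a-b,v\}$, which fails for $u=1$, $v=0$, $a=b=1$. A secondary problem: a perturbation $\delta$ small relative to the $M$'s forcing $Y$ leaves $Y$ the maximizer of all four problems, so all four values are modular in $(a,b)$ and the inequality is an equality, carrying no information; you would need perturbations on the scale of $M$, and even then the computation above shows the straddling pair $(i,j)$ is the wrong place to look. Note also that you are attempting the stronger claim ``for \emph{every} $i\in X\setminus Y$ there is a $j$'', which a single local inequality testing one candidate exchange cannot certify: failure of that one exchange is consistent with submodularity.

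This is also why the paper's proof looks the way it does. It does not test a single exchange $Y+i-j$; it aggregates the failure of \emph{all} exchanges between a violating pair $X,Y$. It confines the maximizers to $\{X,Y\}$ by placing $\mp M^2$ on $X\cap Y$ and on $N\setminus(X\cup Y)$ (this part matches your ``force the maximizer'' idea), and then chooses two incomparable vectors $p,q$ that differ only on $X\setminus Y$ --- one puts $-M$ on a single $i_0\in X\setminus Y$, the other puts $-M$ on the rest of $X\setminus Y$ --- so that $p\wedge q$ strongly favours $X$, $p\vee q$ favours $Y$, while neither $p$ nor $q$ alone favours $X$ enough; disconnectedness then forces $g(p)+g(q)<g(p\vee q)+g(p\wedge q)$. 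Your leading-order-in-$M$ strategy is right in spirit, but the combinatorial content requires splitting $X\setminus Y$ itself between the two price vectors rather than pairing one element of $X\setminus Y$ with one element of $Y\setminus X$.
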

\begin{proof}
To prove this by contradiction,
suppose that $\dom f$ is not connected. 
Then there exist  $X, Y \in \dom f$ 
such that $|X \setminus Y|=|Y \setminus X| \geq 2$ and 
there exists no $Z \in \dom f \setminus \{ X, Y \}$ satisfying 
$X \cap Y \subseteq Z \subseteq X \cup Y$.
Let $i_{0}$ be any element of $X \setminus Y$
and $j_{0}$ be any element of $Y \setminus X$.
Let $M$ be a sufficiently large positive number
in the sense that $M \gg n$ and $M \gg F$ for 
$F = \max \{ |f(W)| \mid W \in \dom f \}$.
Define $p, q \in \RR^{N}$ by
\begin{align*} 
& p_{i}  =
   \left\{  \begin{array}{ll}
    -M           &   (i = i_{0}) ,     \\
    0           &   (i  \in (X \setminus Y) \setminus \{ i_{0} \}) ,     \\
                     \end{array}  \right.
\quad 
q_{i}  = 
   \left\{  \begin{array}{ll}
    0           &   (i = i_{0}) ,     \\
    -M           &   (i  \in (X \setminus Y) \setminus  \{ i_{0} \}) ;     \\
                     \end{array}  \right.
\end{align*}
\begin{align*} 
& p_{i} = q_{i}  =
   \left\{  \begin{array}{ll}
    -M           &   (i = j_{0}) ,     \\
    0           &   (i  \in (Y \setminus X) \setminus  \{ j_{0} \}) ,     \\
   - M\sp{2}     &   (i \in X \cap Y ),  \\
   + M\sp{2}     &   (i \in N \setminus (X \cup Y)  ) . \\
                     \end{array}  \right.
\end{align*}
Denote $m=|X \setminus Y|$ and $C = M\sp{2} |X \cap Y|$.
Since there is no $Z \in \dom f \setminus \{ X, Y \}$ satisfying 
$X \cap Y \subseteq Z \subseteq X \cup Y$,
we have
\begin{align*} 
g(p) &= 
\max\{ f(X) - p(X), f(Y)-p(Y) \} 
\notag \\ &= 
\max\{ f(X) + M, f(Y)+ M \} + C
\notag \\ &\leq
 F + M + C,
 \\
g(q) &= 
\max\{ f(X) - q(X), f(Y)-q(Y) \} 
\notag \\ &= 
\max\{ f(X) + (m-1)M, f(Y)+ M \} + C
\notag \\ &\leq
 F + (m-1)M + C,
\end{align*}
and therefore
\begin{equation} \label{domconnprfgpgq} 
g(p) + g(q ) \leq  2 F + mM + 2C.
\end{equation}
Similarly, we have
\begin{align*} 
g(p \vee q) &= 
\max\{ f(X) - (p \vee q) (X), f(Y)- (p \vee q)(Y) \} 
\notag \\ &= 
\max\{ f(X) , f(Y)+ M \} + C
\notag \\ & =
 f(Y)+ M + C,
 \\
g(p \wedge q ) &= 
\max\{ f(X) - (p \wedge q )(X), f(Y)-(p \wedge q )(Y) \} 
\notag \\ &= 
\max\{ f(X) + m M, f(Y)+ M \} + C
\notag \\ & =
 f(X) +  mM + C,
\end{align*}
and therefore
\begin{equation} \label{domconnprfgpVqgpAq} 
g(p \vee q) + g(p \wedge q ) =  f(X) + f(Y) + (m+1)M + 2C.
\end{equation}
Since $M \gg F$,  it follows from (\ref{domconnprfgpgq}) and (\ref{domconnprfgpVqgpAq}) that
$g(p) + g(q ) < g(p \vee q) + g(p \wedge q )$,
which contradicts the submodularity of $g$.
\end{proof}

\begin{lemma} \label{LMmnatFRconjsubm01-1}
If $\dom f$ is a family of equi-cardinal sets
and $g$ is submodular, 
then $f$ has the local exchange property
{\rm (ii)} in Theorem \ref{THmcavlocexc01}.
\end{lemma}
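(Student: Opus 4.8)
The plan is to establish the local exchange property directly by contradiction, mimicking the structure of Lemma~\ref{LMmnatFRconjsubm01-0} but now exploiting the finer information available when $|X \setminus Y| = 2$. Suppose $X, Y \in \dom f$ with $X \setminus Y = \{i_1, i_2\}$ and $Y \setminus X = \{j_1, j_2\}$, and suppose the local exchange property fails, i.e., none of $X - i_1 + j_1$, $X - i_1 + j_2$, $X - i_2 + j_1$, $X - i_2 + j_2$ lies in $\dom f$ (these are the only candidate sets $X'$ for which the paired set $Y' = Y + \text{(element of $X\setminus Y$)} - \text{(element of $Y\setminus X$)}$ also has the right form). I would pick a large parameter $M$ as in the previous lemma, forcing the optimization defining $g$ to ``see'' only the relevant sets by assigning weight $-M^2$ to coordinates in $X \cap Y$, weight $+M^2$ to coordinates outside $X \cup Y$, and moderate weights on $\{i_1, i_2, j_1, j_2\}$.

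The key step is to choose $p$ and $q$ on the four distinguished coordinates so that $p \vee q$ and $p \wedge q$ select exactly $X$ and $Y$ (whose values dominate because the mixed sets are excluded from $\dom f$), while $p$ and $q$ themselves each select one of $X$, $Y$, producing a violation of \eqref{gsubmR}. Concretely I expect something like $p = (-M, 0; 0, 0)$ and $q = (0, -M; 0, 0)$ on $(i_1, i_2; j_1, j_2)$ won't quite work since that only excludes two of the four mixed sets; the correct choice must be engineered so that all four sets $X - i_a + j_b$ would have to be feasible to beat $X$ or $Y$ in the relevant evaluations, and since they are all infeasible, $g(p), g(q)$ are controlled while $g(p \vee q) + g(p \wedge q)$ picks up an extra $+M$ term. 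The arithmetic then yields $g(p) + g(q) < g(p \vee q) + g(p \wedge q)$ for $M$ large, contradicting submodularity.

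The main obstacle is the combinatorial bookkeeping of which sets can be selected by each of $g(p)$, $g(q)$, $g(p \vee q)$, $g(p \wedge q)$: because $|X \setminus Y| = 2$ there is an intermediate layer of $2 \times 2 = 4$ potential ``swap'' sets, and the weight vectors $p, q$ must be chosen so that only the infeasibility of all four of them blocks a better value. I would set the weights on $\{i_1, i_2\}$ to $\{-M, 0\}$ in $p$ and $\{0, -M\}$ in $q$ (so $p \wedge q$ has $\{-M,-M\}$ favoring keeping both, hence set $X$; $p \vee q$ has $\{0,0\}$) and the weights on $\{j_1, j_2\}$ to small distinct positive perturbations $\varepsilon_1 \ne \varepsilon_2$ common to $p$ and $q$, used only to break ties and ensure that whenever a swap set is feasible it strictly beats $X$ or $Y$ — this is the device that converts ``no local exchange'' into a strict submodularity violation. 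One then verifies that under the exclusion hypothesis $g(p) + g(q)$ stays at roughly $f(X) + f(Y) + M + 2C$ while $g(p \vee q) + g(p \wedge q) = f(X) + f(Y) + 2M + 2C$ or similar, giving the contradiction. The only genuinely delicate point is confirming that the $+M^2$ and $-M^2$ terms indeed eliminate every $Z$ with $Z \not\subseteq X \cup Y$ or $Z \not\supseteq X \cap Y$ from contention in all four evaluations simultaneously, which follows from $M \gg F$ exactly as in Lemma~\ref{LMmnatFRconjsubm01-0}.
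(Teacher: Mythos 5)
There is a genuine gap, and it starts with the very first sentence of your argument: you interpret the failure of the local exchange property as the statement that none of the four swap sets $X - i_a + j_b$ lies in $\dom f$. That is not what failure means. Property (ii) of Theorem \ref{THmcavlocexc01} is a quantitative inequality on function values: writing $\alpha_{ij} = f((X\cap Y)+i+j)$ with $X\setminus Y=\{1,2\}$, $Y\setminus X=\{3,4\}$, failure means $\alpha_{12}+\alpha_{34} > \max\{\alpha_{13}+\alpha_{24},\ \alpha_{14}+\alpha_{23}\}$, which can happen with all six sets feasible (e.g.\ $\alpha_{12}=\alpha_{34}=1$ and all cross values $0$). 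Your construction, which only uses membership in $\dom f$ to decide which sets ``block'' the maxima defining $g$, cannot see this case at all; the $\varepsilon$-perturbations you propose for tie-breaking do not encode the pairwise sums $\alpha_{13}+\alpha_{24}$ and $\alpha_{14}+\alpha_{23}$ that the hypothesis actually constrains.

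Even in the pure-infeasibility subcase your weights do not close the argument. With $p=(-M,0,\varepsilon_1,\varepsilon_2)$ and $q=(0,-M,\varepsilon_1,\varepsilon_2)$ on $(1,2,3,4)$ one gets $g(p)+g(q)\approx 2\alpha_{12}+2M+2C$ while $g(p\vee q)+g(p\wedge q)\approx \max\{\alpha_{12},\alpha_{34}\}+\alpha_{12}+2M+2C$, so there is no violation unless $\alpha_{34}>\alpha_{12}$ --- the relative sizes of $\alpha_{12}$ and $\alpha_{34}$ (and of the finite cross terms) matter and cannot be chosen away. The missing idea, which is the heart of the paper's proof, is to first normalize by a dual potential: by LP duality for the maximum-weight perfect matching problem on $K_4$ (with strict complementary slackness; see Remark \ref{RMmnatconj01LPdual}), the unique optimality of the matching $\{(1,2),(3,4)\}$ yields $\hat p$ with $\alpha_{12}=\hat p_1+\hat p_2$, $\alpha_{34}=\hat p_3+\hat p_4$, and $\alpha_{ij}<\hat p_i+\hat p_j$ otherwise. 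After translating by $\hat p$ the shifted values satisfy $\beta_{12}=\beta_{34}=0$ and $\beta_{ij}<0$ for the cross pairs, and only then do small perturbations of the form $(a,-a,0,0)$ produce a clean submodularity violation. Without this normalization step your plan does not go through.
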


\begin{proof}
To prove by contradiction, suppose that
the local exchange property fails for 
\RED{%
some $X, Y \in \dom f$ 
}%
with $|X \setminus Y|=|Y \setminus X|=2$.
To simplify notations we assume
$X \setminus Y = \{ 1 , 2 \}$ and 
$Y \setminus X = \{ 3 , 4 \}$,
and write 
$\alpha_{ij} = f((X \cap Y) + i + j)$, etc.
Then we have
$\alpha_{12} + \alpha_{34} > 
       \max\{\alpha_{13} + \alpha_{24}, \alpha_{14} + \alpha_{23}\}$
by the failure of the local exchange property.
Consider an undirected graph $G =(V,E)$ on vertex set $V = \{ 1,2,3,4 \}$ 
and edge set $E = \{ (i,j) \mid \alpha_{ij} > -\infty \}$.
The graph $G$ has a unique maximum weight perfect matching $M = \{ (1,2), (3,4) \}$
with respect the edge weight $\alpha_{ij}$.
By duality (see Remark \ref{RMmnatconj01LPdual} below)
there exists 
$\hat p = (\hat p_{1}, \hat p_{2}, \hat p_{3}, \hat p_{4}) \in \RR\sp{4}$ such that
$\alpha_{12} = \hat p_{1} + \hat p_{2}$,
$\alpha_{34} = \hat p_{3} + \hat p_{4}$,
and
$\alpha_{ij} < \hat p_{i} + \hat p_{j}$
if $(i,j) \not= (1,2), (3,4)$.
Define 
$\beta_{ij} = \alpha_{ij} - \hat p_{i} - \hat p_{j}$,
to obtain
$\beta_{12} = \beta_{34} = 0$
and
$\beta_{ij} < 0$ if $(i,j) \not= (1,2), (3,4)$.
\RED{%
Let $B=\min \{ |\beta_{ij}| \mid (i,j) \ne (1,2), (3,4) \}$ $(>0)$.
}%

To focus on
\RED{%
 $I = \{ 1,2,3,4 \}$ 
}%
we partition $p$ into two parts as
$p = (p', p'')$ with
\RED{%
$p' \in \RR^{I}$ and $p'' \in \RR^{ N \setminus I }$.
}%
We express
$p' = \hat p + q$ 
with
\RED{%
$q = (q_{1}, q_{2}, q_{3}, q_{4}) \in \RR\sp{I}$,
}%
while fixing $p''$ to the vector $\bar p$ defined by
\begin{align*} 
\bar p_{i}  =
   \left\{  \begin{array}{ll}
   - M     &   (i \in X \cap Y ),  \\
   + M     &   (i \in N \setminus (X \cup Y) ) \\
                     \end{array}  \right.
\end{align*}
with a sufficiently large positive number $M$.
Let 
$h(q) = g(\hat p + q, \bar p) - M|X \cap Y|$.
By the choice of $\hat p$ and $\bar p$
as well as the assumed equi-cardinality of $\dom f$,
we have
\RED{%
\begin{align*} 
h(q) &= g(\hat p + q, \bar p) - M|X \cap Y|
\\
&=\max \{ f(Z) 
 - (\hat p + q)(Z \cap I) - \bar p(Z \setminus I) 
 \mid Z \subseteq  N  \} 
 - M|X \cap Y|
\\
&=\max \{ f(Z) 
 - (\hat p + q)(Z \cap I) 
 \mid (X \cap Y) \subseteq Z \subseteq (X \cap Y) \cup I \}
\\
&=\max \{ f((X \cap Y) \cup J) - (\hat p + q)(J)  \mid J \subseteq I, |J|=2  \}
\\
&= \max \{ \beta_{ij} - q_{i} - q_{j} \mid  i,j \in I, \ i \ne j \}
\end{align*}
}
if $\| q \|_{\infty}$ is small enough compared with $M$.
\RED{%
Furthermore, 
if 
$\| q \|_{\infty} \leq B/4$,
we have
\[
h(q)   = \max \{  - q_{1} - q_{2},  - q_{3} - q_{4} \}.
\] 
}%
Let $a$ 
\RED{%
be a (small) positive number satisfying $a \leq B/4$.
}%
Then 
$h(0,0,0,0) = h(a,-a,0,0) = h(a,0,0,0) = 0$ and
$h(0,-a,0,0) = a$.
This shows a violation of submodularity of $h$, and hence that of $g$.
\end{proof}

Lemmas \ref{LMmnatFRconjsubm01-0} and \ref{LMmnatFRconjsubm01-1}
with Theorem \ref{THmcavlocexc01}
show the following.

\medskip

\begin{lemma} \label{LMmnatFRconjsubm01-2}
If $\dom f$ is a nonempty family of equi-cardinal sets
and $g$ is submodular, 
then $f$ is an M-concave function.
\finbox
\end{lemma}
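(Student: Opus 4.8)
The plan is to deduce this immediately from the local exchange characterization of M-concavity, Theorem~\ref{THmcavlocexc01}, by verifying its two hypotheses under the present assumptions. Thus it suffices to check (i) that $\dom f$ is a connected nonempty family of equi-cardinal sets, and (ii) that for every $X, Y \in \dom f$ with $|X \setminus Y| = 2$ there exist $i \in X \setminus Y$ and $j \in Y \setminus X$ satisfying the exchange inequality~(\ref{valmatexc1}). Once (i) and (ii) are in place, Theorem~\ref{THmcavlocexc01} gives that $f$ is M-concave.

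For (i), nonemptiness and equi-cardinality of $\dom f$ are part of the hypothesis, so only connectedness has to be argued; but this is precisely the conclusion of Lemma~\ref{LMmnatFRconjsubm01-0}, which applies because $\dom f$ is assumed equi-cardinal and $g$ is assumed submodular. For (ii), the required local exchange property is exactly the content of Lemma~\ref{LMmnatFRconjsubm01-1}, again under the same two hypotheses. Invoking Theorem~\ref{THmcavlocexc01} then completes the proof.

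Since all the substantive work has been packaged into the two preceding lemmas --- Lemma~\ref{LMmnatFRconjsubm01-1} carrying the combinatorial heart (the four-element matching and LP-duality computation that localizes $g$ to a $4\times 4$ pattern), and Lemma~\ref{LMmnatFRconjsubm01-0} handling connectedness via an explicit perturbation of the conjugate --- there is essentially no remaining obstacle at this step: it is a routine assembly. If one wanted to prove the statement directly, without these preliminaries, the genuinely hard part would be exactly what Lemma~\ref{LMmnatFRconjsubm01-1} achieves, namely extracting a local exchange inequality on a four-element difference set from a single instance of submodularity of $g$, which requires choosing dual prices $\hat p$ on $\{1,2,3,4\}$ together with large penalties elsewhere so that $g$ restricts to the relevant pattern.
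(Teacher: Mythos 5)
Your proposal is correct and matches the paper's own argument exactly: the paper derives this lemma by combining Lemma~\ref{LMmnatFRconjsubm01-0} (connectedness) and Lemma~\ref{LMmnatFRconjsubm01-1} (local exchange) with Theorem~\ref{THmcavlocexc01}, precisely as you do. Nothing is missing.
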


\medskip

\begin{remark} \rm  \label{RMmnatconj01LPdual}
In general, the perfect matching polytope of a graph $G=(V,E)$
is described by the following system of equalities for $x \in \RR\sp{E}$:
(i) $x_{e} \geq 0$ for each $e \in E$,
(ii) $x(\delta(v)) = 1$  for each $v \in V$,
(iii) $x(\delta(U)) \geq 1$  
for each $U \subseteq V$ with $|U|$ being odd $\geq 3$,
where $\delta(v)$ denotes the set of edges incident to a vertex $v$
and   $\delta(U)$ the set of edges between $U$ and $V \setminus U$;
see Schrijver \cite[Section 25.1]{Sch03}.
In the proof of Lemma~\ref{LMmnatFRconjsubm01-1} we have
$V = \{ 1,2,3,4 \}$, in which case the inequalities of type (iii) are not needed,
since $\delta(U)= \delta(v)$ for $U$ with $|U|=3$ and the vertex $v \in V \setminus U$.
Consider the maximum weight perfect matching problem on our $G=(V,E)$.
This problem can be formulated in a linear program
to maximize
$\sum_{(i,j) \in E} \alpha_{ij} x_{ij}$
subject to  $\sum_{j} x_{ij} = 1$
 for $i=1,2,3,4$
and
$x_{ij} \geq 0$ for $(i,j) \in E$.
Our assumption
$\alpha_{12} + \alpha_{34} > 
       \max\{\alpha_{13} + \alpha_{24}, \alpha_{14} + \alpha_{23}\}$
means that this problem has a unique optimal solution $x$ with
$x_{12}= x_{34}=1$ and $x_{ij}= 0$ for $(i,j) \not= (1,2), (3,4)$.
The dual problem is to minimize  
$p_{1} + p_{2} + p_{3} + p_{4}$
subject to
$p_{i} + p_{j} \geq \alpha_{ij}$ for $(i,j) \in E$.
The strict complementary slackness guarantees the
existence of a pair of optimal solutions $(x_{ij} \mid (i,j) \in E)$ and $(p_{i} \mid i=1,2,3,4)$
with the property that 
either $x_{ij} > 0$ or $p_{i} + p_{j} > \alpha_{ij}$ 
(exactly one of these) holds 
for each $(i,j) \in E$.
Therefore, there exists
$(\hat p_{1}, \hat p_{2}, \hat p_{3}, \hat p_{4})$ such that
$\alpha_{12} = \hat p_{1} + \hat p_{2}$,
$\alpha_{34} = \hat p_{3} + \hat p_{4}$,
and
$\alpha_{ij} < \hat p_{i} + \hat p_{j}$
for $(i,j) \not= (1,2), (3,4)$.
\finbox
\end{remark}

\medskip

Next we turn to the \Mnat-concave case.
Consider the function 
$\tilde{f}: 2^{\tilde N} \to \RR \cup \{ -\infty \}$
of (\ref{assocMdef})
associated with $f: 2^{N} \to \RR \cup \{ -\infty \}$,
where $\tilde{N} = N \cup S$
and $\dom \tilde{f} \subseteq \{ X \mid |X|=r \}$.
We take $S$ with $|S| \geq r-r'+2$.
Let $\tilde{g}(p,q)$ denote the conjugate of $\tilde{f}$,
where $p \in \RR\sp{N}$ and $q \in \RR\sp{S}$. 

\medskip

\begin{lemma} \label{LMmnatFRconjsubm01-4}
If $g$ is submodular, then $\tilde{g}$ is submodular.
\end{lemma}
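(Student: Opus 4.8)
I would prove Lemma~\ref{LMmnatFRconjsubm01-4} through the local characterization of submodularity, the device already used for Lemma~\ref{LMmnatTOconjsubm01}: it suffices to check, for every $\tilde p=(p,q)\in\RR^{N}\times\RR^{S}$, every pair of distinct coordinates $k,l\in\tilde N$, and all $a,b\ge 0$, that $\tilde g(\tilde p+a\unitvec{k})+\tilde g(\tilde p+b\unitvec{l})\ge\tilde g(\tilde p)+\tilde g(\tilde p+a\unitvec{k}+b\unitvec{l})$. For this I would use the description $\tilde g(p,q)=\max\{\,f(X)-p(X)-q(U)\mid X\subseteq N,\ U\subseteq S,\ |X|+|U|=r\,\}$ coming from (\ref{assocMdef}); note that, for a fixed $X$, an optimal $U$ consists of the $r-|X|$ coordinates of $q$ with smallest values, so the $S$-parts of maximizers are nested. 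The argument then splits into three cases according to whether $k,l$ both lie in $N$, both lie in $S$, or one in each.

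In each case I would fix maximizing pairs $(X_0,U_0)$ for $\tilde g(\tilde p)$ and $(X_*,U_*)$ for $\tilde g(\tilde p+a\unitvec{k}+b\unitvec{l})$ (all of total size $r$, with $X_0,X_*\in\dom f$), and then split the differences $X_0\triangle X_*$ and $U_0\triangle U_*$ between two new pairs $(X_1,U_1)$, $(X_2,U_2)$ of total size $r$, chosen so that (a) the total $p$- and $q$-prices are unchanged, (b) the perturbed coordinates $k,l$ land so that the perturbation terms can only help, (c) $|X_1|,|X_2|$ match $|X_0|,|X_*|$, so that the bottom-$(r-|X_i|)$ corrections to the $q$-term agree on the two sides, and (d) $X_1,X_2\in\dom f$. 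Feeding these candidates into the two left-hand terms reduces the desired inequality to the single combinatorial estimate $f(X_1)+f(X_2)\ge f(X_0)+f(X_*)$. To obtain this I would apply submodularity of $g$, in the form $g(\pi)+g(\rho)\ge g(\pi\vee\rho)+g(\pi\wedge\rho)$, at points $\pi,\rho\in\RR^{N}$ obtained from $p$ by adjoining large $\pm M$ penalties, engineered so that $g(\pi),g(\rho)$ are attained at $X_1,X_2$ while $g(\pi\vee\rho),g(\pi\wedge\rho)$ dominate the contributions of $X_0,X_*$ at matching prices; the $M$-terms cancel and the estimate drops out. The hypothesis $|S|\ge r-r'+2$ guarantees that $S$ has enough elements to realize the prescribed cardinalities in step (c), and the nested structure of the $S$-parts keeps the re-distribution of $U_0\triangle U_*$ consistent with optimality.

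The main obstacle is exactly the simultaneous fulfilment of (c)--(d) together with the penalty construction for the $f$-estimate: the split of $X_0\triangle X_*$ must produce sets lying in $\dom f$, of the prescribed cardinalities, respecting the $k,l$ constraints, and at the same time becoming the (essentially unique) $g$-maximizers at $\pi,\rho$ — which forces $X_0$ and $X_*$ into the roles of $X_1\cap X_2$ and $X_1\cup X_2$, so one first has to replace $X_0,X_*$ by a suitably uncrossed pair of maximizers before the re-pairing is possible. Arranging all of this compatibly in each of the three cases is the technical heart of the proof. (A tempting shortcut would be to eliminate the inner choice of $U$ by linear-programming duality for the bottom-element subproblem and write $\tilde g(p,q)=\min_{\mu}\{\,g(p-\mu\vecone_{N})-r\mu+\sum_{i\in S}\max\{\mu-q_{i},\,0\}\,\}$; however this dual bound need not be tight, because the cardinality-indexed value function of $f$ need not be concave, so the case analysis above is the reliable route.)
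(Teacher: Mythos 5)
Your setup coincides with the paper's: the reduction of submodularity of $\tilde g$ to the two-coordinate inequality $\tilde g(\tilde p+a\unitvec{k})+\tilde g(\tilde p+b\unitvec{l})\ge\tilde g(\tilde p)+\tilde g(\tilde p+a\unitvec{k}+b\unitvec{l})$, the formula $\tilde g(p,q)=\max\{f(X)-p(X)-q(U)\mid |X|+|U|=r\}$, and the three-way case split on where $k,l$ lie. But from that point on you miss the observation that makes this lemma short, namely that every case \emph{collapses}. When both coordinates lie in $N$, the inequality is (after the normalization $\tilde p=\veczero$) exactly the submodularity inequality for $g$ itself --- nothing needs to be re-derived. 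When one or both coordinates lie in $S$, the hypothesis $|S|\ge r-r'+2$ guarantees that the $S$-parts of the maximizers can always be chosen to avoid $\{k,l\}\cap S$ (since only at most $r-r'$ elements of $S$ are ever needed), so the perturbations on those coordinates do not change the maximum at all: the four terms pair off into equalities such as $\tilde g(\tilde p^{ij})=\tilde g(\tilde p^{i})=g(p^{i})$ and $\tilde g(\tilde p^{j})=\tilde g(\tilde p)=g(p)$, and the inequality is immediate. Your ``nested $S$-parts'' remark is pointing at this, but you never draw the conclusion; instead you route all three cases through a heavy uncrossing-and-penalty construction.

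That construction is where the genuine gap lies. Your plan reduces the inequality to $f(X_1)+f(X_2)\ge f(X_0)+f(X_*)$, where $X_1,X_2\in\dom f$ are obtained by splitting $X_0\triangle X_*$. Such an inequality is false for an arbitrary splitting even when $f$ \emph{is} M$^{\natural}$-concave; for a suitable splitting it is precisely a (multiple) exchange property of $f$ --- i.e., the hard direction of Theorem~\ref{THmnatcavbyconjfn01} itself, which the paper extracts from submodularity of $g$ only through the combination of Lemmas~\ref{LMmnatFRconjsubm01-0}--\ref{LMmnatFRconjsubm01-2} and Theorem~\ref{THmcavlocexc01}, applied to the equi-cardinal function $\tilde f$ \emph{after} the present lemma is available. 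So your argument for Lemma~\ref{LMmnatFRconjsubm01-4} presupposes, in effect, the conclusion it is meant to enable, and the step you yourself flag as ``the technical heart'' --- forcing $g(\pi),g(\rho)$ to be attained at the prescribed $X_1,X_2$ while the uncrossed pair $X_0,X_*$ controls $g(\pi\vee\rho),g(\pi\wedge\rho)$ --- is left entirely unconstructed. As written, the proposal identifies the obstacle but does not overcome it, so the proof is incomplete; the intended argument requires no exchange property of $f$ at all.
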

\begin{proof}
By definition,
\begin{equation} \label{mFRconjsubm01prf3}
  \tilde{g}(p,q) = \max\{ f(X) - p(X) - q(U) \mid X \subseteq N,  \ U \subseteq S, \ |X| + |U| = r \} .
\end{equation}
It suffices to prove that
\begin{equation} \label{mTOcjsbm01prf6}
\tilde{g}( \tilde p + a \tilde{\unitvec{i}}) + \tilde{g}( \tilde p + b \tilde{\unitvec{j}}) 
\geq 
\tilde{g}(\tilde p)  + \tilde{g}( \tilde p + a \tilde{\unitvec{i}}+ b \tilde{\unitvec{j}}) 
\end{equation}
holds for any $\tilde p = (p,q) \in \RR\sp{N \cup S}$, distinct $i,j \in N  \cup S$, 
and $a, b \geq 0$,
where $\tilde{\unitvec{i}}$ and $\tilde{\unitvec{j}}$ are the $i$th and $j$th unit vectors
in $\RR\sp{N \cup S}$, respectively.
\RED{
Let
$h_{ij}(a,b) = \tilde{g}( \tilde p + a \tilde{\unitvec{i}} + b \tilde{\unitvec{j}})$.
Then \eqref{mTOcjsbm01prf6} can be rewritten as
\begin{equation} \label{mFRconjsubm01prf7}
h_{ij}(a,0) + h_{ij}(0,b)
\geq 
h_{ij}(0,0) + h_{ij}(a,b).
\end{equation}
In the following we assume
$\tilde p = (p,q) = \veczero$
for notational simplicity
(without essential loss of generality).
Then it follows from \eqref{mFRconjsubm01prf3} that
\begin{align} 
& h_{ij}(a,b) 
= \tilde{g}( \tilde p + a \tilde{\unitvec{i}} + b \tilde{\unitvec{j}})
\nonumber \\
&=  \max\{ f(X)  - a | (X \cup U) \cap \{ i \} | 
     - b | (X \cup U) \cap \{ j \} | \mid |X| + |U| = r \} .
\label{mFRconjsubm01prf8}
\end{align}
If $i, j \in S$, for example,
the function to be maximized reduces to
$f(X)  - a | U \cap \{ i \} | - b | U \cap \{ j \} |$
while, for each $X \in \dom f$,
there exists a subset $U$ of $S$ satisfying 
$|X| + |U| = r$ and $U \cap \{ i,j \} = \emptyset$ 
since $|S| \geq r-r'+2$ and $|X| \geq r'$.
Therefore, 
$h_{ij}(a,b) =   \max_{X} f(X) = g(\veczero) = g(p)$
for $i, j \in S$.
In a similar way 
we obtain
\[
h_{ij}(a,b) = 
\begin{cases}
g(p + a \unitvec{i}+ b \unitvec{j})
& (i,j \in N),
\\
g(p + a \unitvec{i})
& (i \in N, j \in S),
\\
g(p + b \unitvec{j})
& (i \in S, j \in N),
\\
g(p)
& (i,j \in S) .
\end{cases}
\]
Then \eqref{mFRconjsubm01prf7} follows from the assumed submodularity of $g$.
}
\OMIT{
For simplicity of notation we assume 
$\tilde p = \veczero$,
and write
$\tilde p\sp{i}= a \tilde{\unitvec{i}}$,
$\tilde p\sp{j}= b \tilde{\unitvec{j}}$,
and
$\tilde p\sp{ij}= a \tilde{\unitvec{i}}+ b \tilde{\unitvec{j}}$.
Take $X, Y \subseteq N$ 
and $U, V \subseteq S$ 
such that
$|X| + |U| = |Y| + |V| = r$, 
\begin{align*} 
&  \tilde{g}(\tilde p) = f(X) - \tilde p (X \cup U) = f(X),
\\
& \tilde{g}(\tilde p\sp{ij}) = f(Y) - \tilde p\sp{ij}(Y \cup V) 
= f(Y) - a | (Y \cup V) \cap \{ i \} | - b | (Y \cup V) \cap \{ j \} |.
\end{align*} 
Note also that
$\tilde{g}(\tilde p\sp{i}) = \max \{ f(Z)  - a | (Z \cup W) \cap \{ i \} | 
  \mid  Z \subseteq N, \ W \subseteq S \} $
and similarly for $\tilde g(\tilde p\sp{j})$.

If $\{ i,j \} \subseteq N$,
(\ref{mTOcjsbm01prf6}) reduces to 
$g( p+ a \unitvec{i}) + g( p+ b \unitvec{j})  \geq 
g( p) + g( p + a \unitvec{i}+ b \unitvec{j})$,
which holds since $g$ is assumed to be submodular.
The remaining cases are easier (not essential).

In case of $\{ i,j \} \subseteq S$,
we can assume, by $|S| \geq r-r'+2$, that $V \cap \{ i,j \} = \emptyset$,
which implies that
$\tilde{g}(\tilde p\sp{ij}) = g(p)$.
Similarly, we have
$\tilde{g}(\tilde p\sp{i}) = \tilde{g}(\tilde p\sp{j}) = g(p)$ as well as 
$\tilde{g}(\tilde p) = g(p)$.
Therefore, (\ref{mTOcjsbm01prf6}) holds.

In case of $| N \cap \{ i,j \} | = | S \cap \{ i,j \} | =1$, 
we may assume $i \in N$ and $j \in S$ by symmetry.
By $|S| \geq r-r'+2$, 
we have
$\tilde{g}(\tilde p\sp{ij}) = \tilde{g}(\tilde p\sp{i}) = g(p\sp{i})$
and
$\tilde{g}(\tilde p\sp{j}) = \tilde{g}(\tilde p) = g(p)$,
where
$p\sp{i}= a \unitvec{i} \in \RR\sp{N}$ and
$p\sp{j}= b \unitvec{j} \in \RR\sp{N}$.
Therefore, (\ref{mTOcjsbm01prf6}) holds.
}
\end{proof}

We are now in the position to complete the proof of Theorem \ref{THmnatcavbyconjfn01}.
If the conjugate function $g$ of $f$
is submodular, $\tilde{g}$ is also submodular by Lemma~\ref{LMmnatFRconjsubm01-4}.
Then $\tilde{f}$ is M-concave by Lemma~\ref{LMmnatFRconjsubm01-2},
and therefore $f$ is \Mnat-concave by Proposition~\ref{PRmnatequicardvalmat}.

\section{Appendix: Proof of Theorem \ref{THmcavlocexc01}}
\label{SClocexcproof}

A self-contained proof of Theorem \ref{THmcavlocexc01} is presented here.
This is basically the same as the proof of Theorem 5.2.25 in \cite[pp.295--297]{Mspr2000}
adapted to our present notation, with the difference only in the proof of Claim 2.

Let $\mathcal{B} = \dom f$.
For $p \in \RR\sp{N}$ we  define
\[
 f_{p}(X) = f(X)+p(X),
\quad
 f_{p}(X,i,j) = f_{p}(X-i+j)-f_{p}(X)
  \qquad (X \in \mathcal{B}),
\]
where $f_{p}(X,i,j)= - \infty$ if $X-i+j \not\in \mathcal{B}$. 
For $X, Y \in \mathcal{B}$, $i \in X\setminus Y$, and 
$j \in Y\setminus X$, we have
\begin{equation}  \label{wpeq}
 f(X,i,j) + f(Y,j,i)
  =  f_{p}(X,i,j) +  f_{p}(Y,j,i).
\end{equation}
If $X \in \mathcal{B}$, 
$X \setminus Y = \{ i_{0}, i_{1} \}$, 
$Y \setminus X = \{ j_{0}, j_{1} \}$
(with $i_{0} \not= i_{1}$, $j_{0} \not= j_{1}$), 
the local exchange property
(condition (ii) in Theorem \ref{THmcavlocexc01}) 
implies%
\footnote{
If $Y \not\in \mathcal{B}$,
the inequality (\ref{vmprupper2}) is trivially true with $ f_{p}(Y)  = -\infty$.
} 
\begin{equation} 
 f_{p}(Y) - f_{p}(X)  \leq 
 \max \{ f_{p}(X,i_{0},j_{0}) + f_{p}(X,i_{1},j_{1}),  \   
        f_{p}(X,i_{0},j_{1}) + f_{p}(X,i_{1},j_{0}) \} .
\label{vmprupper2}
\end{equation}

\medskip

Define
\begin{eqnarray*}
 \mathcal{D} &=&
   \{ (X,Y) \mid X, Y \in \mathcal{B}, \ 
    \exists \,  i_{*} \in X \setminus Y, \  \forall \,  j \in Y\setminus X:
\nonumber \\
 & & \qquad\qquad \ 
    f(X)+f(Y) > f(X-i_{*}+j) + f(Y+i_{*}-j)
   \},
\end{eqnarray*}
which denotes the set of pairs $(X,Y)$ for which
the exchange property (\ref{valmatexc1}) fails.
We want to show $\mathcal{D} = \emptyset$.

Suppose, to the contrary, that 
$\mathcal{D} \not= \emptyset$, and take
$(X,Y) \in \mathcal{D}$ such that $|Y\setminus X|$ is minimum and let
$i_{*} \in X \setminus Y$ be the element in the definition of $\mathcal{D}$.
We have $|Y\setminus X| > 2$.
Define $p \in \RR\sp{N}$ by
\[
 p_{j} = 
   \left\{  \begin{array}{ll}
     - f(X,i_{*},j)  &   (j \in Y\setminus X, \  X-i_{*}+j \in \mathcal{B}) ,      \\
     f(Y,j,i_{*}) + \varepsilon
     &   (j \in Y\setminus X, \  X-i_{*}+j \not\in \mathcal{B}, \  Y+i_{*}-j \in \mathcal{B}) ,  \\
     0  & (\mbox{otherwise})
                     \end{array}  \right.
\]
with some $\varepsilon > 0$.
\begin{description}
\item[Claim 1:] 
\quad     \vspace*{-\baselineskip}
\begin{eqnarray}
 f_{p}(X,i_{*},j) &=&  0
  \qquad \mbox{if $j \in Y\setminus X, \  X-i_{*}+j \in \mathcal{B}$},
 \label{wpB1}
 \\
 f_{p}(Y,j,i_{*}) & < & 0
  \qquad \mbox{for  $j \in Y\setminus X$}.
 \label{wpB2}
\end{eqnarray}
\end{description}
The  inequality (\ref{wpB2})  can be shown as follows.
If  $X-i_{*}+j \in \mathcal{B}$, we have
$f_{p}(X,i_{*},j)=0$ by (\ref{wpB1}) and 
\[
 f_{p}(X,i_{*},j) + f_{p}(Y,j,i_{*}) 
= f(X,i_{*},j) + f(Y,j,i_{*}) < 0
\]
by (\ref{wpeq}) and the definition of $i_{*}$.
Otherwise we have
$f_{p}(Y,j,i_{*})= - \varepsilon$ or $-\infty$
according to whether $Y+i_{*}-j \in \mathcal{B}$ or not.
\begin{description}
\item[Claim 2:] 
There exist $i_{0} \in X \setminus Y$ and  $j_{0} \in Y\setminus X$ 
such that
$i_{0} \not= i_{*}$, $Y+i_{0}-j_{0} \in \mathcal{B}$, and
\begin{equation} \label{wpmax}
 f_{p}(Y,j_{0},i_{0}) \geq f_{p}(Y,j,i_{0})
  \qquad (j \in Y\setminus X) .
\end{equation}
\end{description}

First, we show the existence of
$i_{0} \in X \setminus Y$ and $j \in Y\setminus X$
such that $Y + i_{0} - j \in \mathcal{B}$ and $i_{0} \not= i_{*}$.
By connectedness of $\mathcal{B}$
and $|X \setminus Y| > 2$, 
there exist $i_{1} \in X \setminus Y$ and $j_{1} \in Y \setminus X$ such that
$Z=Y + i_{1} - j_{1} \in \mathcal{B}$.
If $i_{1} \not= i_{*}$,  
we are done with $(i_{0},j) = (i_{1},j_{1})$. 
Otherwise, 
again by connectedness, 
there exist $i_{2} \in X \setminus Z$ and $j_{2} \in Z \setminus X$ such that
$W = Z + i_{2} - j_{2} \in \mathcal{B}$.
Since $|W \setminus Y| = 2$
with
$W = Y + \{ i_{1}, i_{2} \} - \{ j_{1}, j_{2} \}$,
we obtain
$ Y + i_{2} - j_{1} \in \mathcal{B}$
or
$ Y + i_{2} - j_{2} \in \mathcal{B}$
from (\ref{valmatexc1}).
Hence we can take $(i_{0},j) =(i_{2},j_{1})$ or $(i_{0},j) =(i_{2},j_{2})$;
note that $i_{2}$ is distinct from $i_{*}$.
Next we choose the element $j_{0}$.
By the choice of $i_{0}$, we have $f_{p}(Y,j,i_{0}) > -\infty$ 
for some $j \in Y\setminus X$.
By letting $j_{0}$ to be an element
$j \in Y\setminus X$ that maximizes 
$f_{p}(Y,j,i_{0})$, we obtain (\ref{wpmax}). 
Thus Claim 2 is established under the connectedness assumption.

\begin{description}
\item[Claim 3:]
$(X,Z) \in \mathcal{D}$ with $Z=Y+i_{0}-j_{0}$.
\end{description}
To prove this  it suffices to  show
\[
 f_{p}(X,i_{*},j) + f_{p}(Z,j,i_{*}) < 0
  \qquad (j \in Z\setminus X).
\]
We may restrict ourselves to $j$ with  $X-i_{*}+j \in \mathcal{B}$, 
since otherwise
the first term $f_{p}(X,i_{*},j)$ is equal to $-\infty$.
For such $j$  the first term is equal to zero by (\ref{wpB1}).
For the second term it follows from
(\ref{vmprupper2}), (\ref{wpB2}), and  (\ref{wpmax}) that
\begin{eqnarray*}
  f_{p}(Z,j,i_{*}) 
 & = &
 f_{p}(Y+\{ i_{0},i_{*} \} - \{ j_{0},j \} ) 
  -  f_{p}(Y+ i_{0} - j_{0})
 \nonumber \\
 & \leq &  \max \left[
   f_{p}(Y,j_{0},i_{0})+f_{p}(Y,j,i_{*}),
   f_{p}(Y,j,i_{0})+f_{p}(Y,j_{0},i_{*})
               \right] 
  -  f_{p}(Y,j_{0},i_{0})
 \nonumber \\
 &  < &  \max \left[
   f_{p}(Y,j_{0},i_{0}),   f_{p}(Y,j,i_{0})
               \right] 
  - f_{p}(Y,j_{0},i_{0})
 \  = 0.
\end{eqnarray*}

Since $|Z\setminus X| = |Y\setminus X|-1$, Claim 3 contradicts our choice
of $(X,Y) \in \mathcal{D}$.
Therefore we conclude  $\mathcal{D} = \emptyset$.
This completes the proof of Theorem \ref{THmcavlocexc01}.

\medskip

\begin{remark} \rm \label{RMproofmodif}
For the ease of reference,
we describe here the necessary change 
in the proof of \cite[Theorem 5.2.25]{Mspr2000} in the notation there. 
The necessary change is localized to the proof of 
\begin{quote}
Claim 2: 
There exist $u_{0} \in B\setminus B'$ and  $v_{0} \in B'\setminus B$ 
such that
$u_{0} \not= u_{*}$, $B'+u_{0}-v_{0} \in \mathcal{B}$, 
\begin{equation} \label{wpmax-1}
 \omega_{p}(B',v_{0},u_{0}) \geq \omega_{p}(B',v,u_{0})
  \qquad (v \in B'\setminus B) .
\end{equation}
\end{quote}
We now assume connectedness of $\mathcal{B}$, instead of its exchange property.
First, we show the existence of
$u_{0} \in B\setminus B'$ and $v \in B'\setminus B$
such that $B' + u_{0} - v \in \mathcal{B}$ and $u_{0} \not= u_{*}$.
By connectedness of $\mathcal{B}$
and $|B\setminus B'| > 2$, 
there exist $u_{1} \in B \setminus B'$ and $v_{1} \in B' \setminus B$ such that
$B''=B' + u_{1} - v_{1} \in \mathcal{B}$.
If $u_{1} \not= u_{*}$,  
we are done with $(u_{0},v) = (u_{1},v_{1})$. 
Otherwise, 
again by connectedness, 
there exist $u_{2} \in B \setminus B''$ and $v_{2} \in B'' \setminus B$ such that
$B''' = B'' + u_{2} - v_{2} \in \mathcal{B}$.
Since $|B''' \setminus B'| = 2$
with
$B''' =  B' + \{ u_{1}, u_{2} \} - \{ v_{1}, v_{2} \}$,
we obtain
$ B' + u_{2} - v_{1} \in \mathcal{B}$
or
$ B' + u_{2} - v_{2} \in \mathcal{B}$
from (\ref{valmatexc1}).
Hence we can take $(u_{0},v) =(u_{2},v_{1})$ or $(u_{0},v) =(u_{2},v_{2})$;
note that $u_{2}$ is distinct from $u_{*}$.
Next we choose the element $v_{0}$.
By the choice of $u_{0}$, we have $\omega_{p}(B',v,u_{0}) > -\infty$ 
for some $v \in B'\setminus B$.
By letting $v_{0}$ to be an element
$v \in B'\setminus B$ that maximizes 
$\omega_{p}(B',v,u_{0})$, we obtain (\ref{wpmax-1}). 
Thus Claim 2 is established under the connectedness assumption.
\finbox
\end{remark}

\section*{Acknowledgement}
The authors are grateful to Satoru Fujishige for kindly explaining the proof of \cite{DL01gs}
written in Russian.
This work is supported by The Mitsubishi Foundation, CREST, JST, 
Grant Number JPMJCR14D2, Japan, and
JSPS KAKENHI Grant Numbers  26280004, 15K00030.

\end{document}